\documentclass[11pt]{amsart}

\usepackage[a4paper,hmargin=3.5cm,vmargin=4cm]{geometry}
\usepackage{amsfonts,amssymb,amscd,amstext}
\usepackage{graphicx}
\usepackage[dvips]{epsfig}
\usepackage{quoting}
\usepackage{hyperref}

\usepackage{fancyhdr}
\usepackage{enumitem}
\usepackage{titlesec}
\usepackage{mathrsfs}

\titleformat{\section}
{\filcenter\bfseries} {\thesection{.}}{0.2cm}{}
\titleformat{\subsection}[runin]
{\bfseries} {\thesubsection{.}}{0.15cm}{}[.]
\titleformat{\subsubsection}[runin]
{\em}{\thesubsubsection{.}}{0.15cm}{}[.]

\usepackage[up,bf]{caption}

\newtheorem{theorem}{Theorem}[section]

\newtheorem{lemma}[theorem]{Lemma}

\newtheorem{corollary}[theorem]{Corollary}

\theoremstyle{definition}

\newtheorem{remark}[theorem]{Remark}

\numberwithin{equation}{section}
\numberwithin{figure}{section}

\newcommand\Dcal{\mathcal{D}}

\newcommand\Ascr{\mathscr{A}}

\newcommand\Cscr{\mathscr{C}}

\newcommand\Oscr{\mathscr{O}}

\newcommand\C{\mathbb{C}}

\newcommand\N{\mathbb{N}}

\newcommand\R{\mathbb{R}}

\renewcommand\S{\mathbb{S}}

\newcommand\Z{\mathbb{Z}}

\newcommand\h{\mathbb{H}}
\newcommand\n{\mathbb{N}}

\newcommand\igot{\mathfrak{i}}

\renewcommand\igot{\mathfrak{i}}

\newcommand\wt{\widetilde}

\newcommand{\SL}{{\rm{SL}}_2(\C)}

\newcommand{\UH}{T_1 \mathbb{H}^3}
\newcommand{\UR}{T_1 R}
\newcommand{\PSL}{{\rm{PSL}}_2(\C)}
\newcommand{\id}{{\rm Id}}
\newcommand{\U}{{\rm{U}_1}}

\newcommand{\om}{\overline{M}}

\usepackage{color}

\usepackage{array}

\begin{document}
	
\fancyhead[LO]{ Properly embedded Legendrian surfaces in $\UH$}
\fancyhead[RE]{J. Hidalgo}
\fancyhead[RO,LE]{\thepage}
	
\thispagestyle{empty}
	

\begin{center}
	{\bf\Large Properly embedded Legendrian surfaces in the tangent sphere bundle of hyperbolic space
			} 
		
	\medskip
%
%
{\bf Jorge Hidalgo}
\end{center}
	
\medskip
	
\begin{quoting}[leftmargin={5mm}]
{\small
	\noindent {\bf Abstract}\hspace*{0.1cm}
	We show that every open Riemann surface may be conformally properly embedded in the tangent sphere bundle of hyperbolic $3$-space as a Legendrian surface. We achieve this by constructing proper holomorphic Legendrian embeddings in $\SL$ whose projections to the quotient $\SL / \U$ remain embeddings. As another consequence, every open Riemann surface admits a proper holomorphic Legendrian embedding in the projectivised special linear group $\PSL$.

	\noindent{\bf Keywords}\hspace*{0.1cm} 
	 Riemann surface, Legendrian surface, holomorphic Legendrian curve, Tangent sphere bundle, Special linear group, Unitary subgroup

	\noindent{\bf Mathematics Subject Classification (2020)}\hspace*{0.1cm} 
	53D10, 
	32H02, 
	32E30, 
	53C42.
}
\end{quoting}
\section{Introduction}\label{sec:intro}
\noindent

Let $(R, \langle \cdot, \cdot \rangle)$ be a $3$-dimensional Riemannian manifold. We consider the {\em tangent sphere bundle} $\UR$ as the subbundle  of the tangent bundle $TR$ consisting of pairs $(x, v)$ such that $x \in R$, $v \in T_x R$, and $\langle v, v \rangle = 1$. This 5-dimensional manifold is canonically endowed with the {\em Sasaki metric} and with a {\em contact structure} induced by $\langle \cdot, \cdot \rangle$, rendering $\UR$ a contact Riemannian manifold.
Tangent sphere bundles are classical objects in differential geometry, both for their relationship with the geodesic flow \cite{Besse1978} and for the interplay between its Riemannian and contact structures. We refer to the pioneering works of Sasaki \cite{Sasaki1958, Sasaki1962} and the surveys \cite{BoeckxVanhecke2000, Calvaruso2005} for background and historical developments; see also \cite{Wolf1974, MussoTricerri1988, Blair2010}.
A {\em Legendrian surface} in $\UR$ is an immersion $L : M \to \UR$ from a smooth surface $M$ that is tangent to the contact structure of $\UR$. 
These surfaces arise naturally when we consider a smooth map $f : M \to R$ that admits a unit normal vector field $\nu$ and form the Legendrian immersion $L = (f, \nu) : M \to \UR$. Conversely, every Legendrian immersion $L : M \to \UR$ projects to a smooth map $f: M \to R$ after composing with the natural projection $TR \to R$, which may have singular points where the differential vanishes. Thus Legendrian surfaces in $\UR$ may be thought of as a generalization of regular surfaces in $R$.

In this paper we focus on constructing properly embedded Legendrian surfaces in the tangent sphere bundle of hyperbolic $3$-space $\h^3$. 
%
%
%
\begin{theorem}\label{th:mt}
Every open Riemann surface admits a proper conformal Legendrian embedding in $\UH$.
\end{theorem}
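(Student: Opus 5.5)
The plan follows the abstract: realize $\UH$ as the homogeneous space $\SL/\U$ and reduce the statement to a problem about holomorphic Legendrian curves in the complex contact manifold $\SL$. Recall that $\h^3\cong \SL/{\rm SU}_2$ and that $\SL$ acts transitively on $\UH$ with stabilizer of a unit vector the diagonal unitary subgroup $\U$. Hence $\UH\cong\SL/\U$, and the projection $\pi:\SL\to\SL/\U$ is a principal $\S^1$-bundle. The first step is to check that the standard holomorphic contact structure on $\SL$ (a left-invariant holomorphic $1$-form $\alpha$, for instance the off-diagonal entry of the Maurer--Cartan form $g^{-1}dg$, whose kernel is a holomorphic contact distribution) is $\U$-invariant. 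One then verifies that $\pi$ maps the complex contact distribution onto the real contact distribution of $\UH$, and that a conformal structure is induced compatibly with the Sasaki metric. Granting this (a Bryant-type correspondence), if $F:M\to\SL$ is a holomorphic Legendrian immersion, then $\pi\circ F:M\to\UH$ is a conformal Legendrian immersion.

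The second step handles properness. Since $\U$ is compact, $\pi$ is a proper map, so $\pi\circ F$ is proper whenever $F$ is. The task therefore reduces to the following: every open Riemann surface $M$ admits a proper holomorphic Legendrian embedding $F:M\to\SL$ such that $\pi\circ F$ is injective. For this I would run the standard Forstneri\v c--L\'arusson type machinery for holomorphic Legendrian curves in complex contact manifolds. The local model is Darboux coordinates, or $\SL$ viewed via the contactomorphism with an open piece of $\C^3$, or $\PSL$ covering. The ingredients are Runge--Mergelyan approximation for Legendrian curves, a general position result, and an exhaustion $M=\bigcup K_j$ by Runge compact sets. We build $F_j$ on neighborhoods of $K_j$ inductively. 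Each $F_{j+1}$ approximates $F_j$ on $K_j$, remains Legendrian, and sends $bK_{j+1}$ outside a large compact set $\{\|g\|>j\}$ of $\SL$. This is achieved by pushing boundaries to infinity along a holomorphic exhaustion of $\SL$, for example using left multiplication by Legendrian flows, or the Riemann--Hilbert method adapted to Legendrian curves. The limit is then proper, and embeddedness in $\SL$ is preserved in the limit by keeping the approximation errors below the injectivity moduli at each step.

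The main obstacle is ensuring that the projection $\pi\circ F$ remains injective, i.e.\ that no two distinct points $p\ne q$ satisfy $F(q)\in F(p)\U$. This is a real-codimension count: the set of pairs $(p,q)\in M\times M\setminus\Delta$ has real dimension $4$, and the condition $F(q)F(p)^{-1}$-type relation $F(p)^{-1}F(q)\in\U$ imposes real codimension $6-1=5$ in $\SL$. So for generic Legendrian $F$ no such pairs exist. I would prove this with a transversality argument. Construct a holomorphic period-dominating spray $F_\zeta$ of Legendrian maps, for example via composition with a spray of contact automorphisms of $\SL$ generated by holomorphic contact vector fields, which are plentiful since $\SL$ is homogeneous. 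The difference map $(p,q,\zeta)\mapsto F_\zeta(p)^{-1}F_\zeta(q)$ must be a submersion off the diagonal. A neighborhood of the diagonal is handled separately: there injectivity follows from $\pi\circ F$ being an immersion, which holds because the Legendrian tangent lines are transverse to the $\U$-fibres, a fact to check from the contact condition. Transversality then gives that for almost every $\zeta$ the bad set is empty on each compact $K_j$. This genericity is inserted at each inductive step, together with the small-error bookkeeping, so that the final proper limit stays injective after projection.
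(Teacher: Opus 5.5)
Your reduction is the same as the paper's. Theorem~\ref{th:mt} is obtained by pushing down, through the proper map $\pi$, a proper holomorphic Legendrian curve $F\colon M\to\SL$ with $F(q)\notin F(p)\U$ for $p\neq q$ (Theorem~\ref{th:mtleg}). Your codimension count, the separate treatment of a neighbourhood of the diagonal via local injectivity of $\pi\circ F$, and the inductive exhaustion with error bookkeeping also match.

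The real difference is the general position step. The paper never perturbs intrinsically in $\SL$. It writes $F=\Dcal\circ X$ with $X$ Legendrian in $\C^3$ and observes that $F(q)=F(p)A_\theta$ if and only if $\delta X(\theta,p,q)$ lies in the discrete set $\Lambda=\{(0,0,2k\pi\igot)\colon k\in\Z\}$. It then makes $\delta X$ miss $\Lambda$ on the $5$-dimensional set $[0,2\pi]\times(\om\times\om\setminus U)$, using the intrinsic sprays of Lemma~\ref{lemma:fixed}. Those sprays dominate $X(q)-X(p)$ at every pair $p_0\neq q_0$, even when $X(p_0)=X(q_0)$, so the value $\theta=0$ gives injectivity of $F$ in $\SL$ for free. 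Properness comes from Alarc\'on's lemma for $\max\{|X_1|,-\Re X_3\}$, which is needed because $\Dcal$ is not proper.

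Your ambient spray composes $F$ with local flows $\Phi_\zeta$ of contact vector fields $V_f$, $\alpha(V_f)=f$. It does make $\zeta\mapsto F_\zeta(p)^{-1}F_\zeta(q)$ submersive wherever $F(p)\neq F(q)$, since the $1$-jets of $f$ at two distinct points can be prescribed independently. However, $\Phi_\zeta(x)^{-1}\Phi_\zeta(x)=\id\in\U$, so it can never remove a double point of $F$ itself. In your scheme, injectivity in $\SL$ must therefore be produced first at each step by an intrinsic general position lemma; Alarc\'on's lemma is exactly the case $\theta=0$ of Lemma~\ref{lemma:theta}.

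What each route buys: yours is coordinate-free and would apply to any complex contact threefold with a free proper action of a compact one-dimensional group whose orbits are transverse to the contact distribution. The paper's gives both embeddedness conditions from one lemma in $\C^3$ and plugs directly into the existing properness machinery.

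Your properness step, as stated, is insufficient. Controlling $F_{j+1}$ only on $bK_{j+1}$ does not make the limit proper. You need the lower bound on all of $K_{j+1}\setminus\mathring K_j$, as in condition (b$_n$) of the paper.

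One error must be corrected: the off-diagonal entries of $g^{-1}dg$ do not define a contact structure. Write $g^{-1}dg$ with diagonal entries $a,-a$ and off-diagonal entries $b,c$. The Maurer--Cartan equation $d(g^{-1}dg)=-g^{-1}dg\wedge g^{-1}dg$ gives $db=-2a\wedge b$, so $b\wedge db=0$ and $\ker b$ is integrable. By contrast, $da=-b\wedge c$ and $a\wedge da\neq0$. The contact form is the diagonal entry, $-a=\eta_{\rm SL}=z_{11}dz_{22}-z_{21}dz_{12}$, and Legendrian curves are exactly those with $F^{-1}dF$ off-diagonal.

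This is not cosmetic for your argument. $\ker b$ contains the fibre direction $g\,{\rm diag}(\igot,-\igot)$ of $\pi$. With your choice, $d\pi$ would not be injective on the distribution, and the transversality of Legendrian lines to the $\U$-fibres that you use near the diagonal would no longer hold. With $\ker a$, both properties hold.
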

Legendrian surfaces in $\UH$ are particularly interesting among Legendrian surfaces in tangent sphere bundles as they connect the holomorphic representation of flat surfaces by Gálvez, Martínez, and Milán \cite{GalvezMartinezMilan2000MA} with its extension to flat fronts (i.e., flat surfaces with admissible singularities) by Kokubu, Umehara, and Yamada \cite{KokubuUmeharaYamada2004}; see also \cite{KRSUY2005, KRUY2007,  GalvezMira2005CVPDE}. 
Regarding other results on Legendrian surfaces in tangent sphere bundles of $3$-dimensional space forms, Li, Wang, and  Wang  \cite{LiWang2024, LiWang2024PAMS} established some classification theorems in 2024 for minimal Legendrian surfaces in the tangent sphere bundles of Euclidean space $\R^3$ and the sphere $\S^3$ under strong restrictions on the curvature or the topology. Even more recently, Li proved a classification result for complete Legendrian surfaces with constant mean curvature in $T_1 \R^3$ in \cite{Li2026}. No analogous results seem to be available yet for the hyperbolic case.

Note that an equivalent way of rephrasing Theorem \ref{th:mt} is to say that every open Riemann surface is the underlying complex structure of a properly embedded Legendrian surface in $\UH$. 
To the best of the author's knowledge, this theorem provides the first construction of properly embedded Legendrian surfaces in a tangent sphere bundle with prescribed complex structure.
This result is similar to the fact that every open Riemann surface admits a proper conformal constant mean curvature one immersion in $\h^3$,
see \cite{AlarconHidalgo2026}.  A major difference is that now we work with surfaces immersed in the higher dimensional manifold $\UH$, so we have larger codimension. This is what allows us to apply a general position argument and obtain embedded surfaces, see below for more.

To prove Theorem \ref{th:mt} we consider the hermitian model of $\mathbb{L}^4$ and identify 
\[
\UH =   \{ (A \overline{A}^T, A e_3 \overline{A}^T) \colon A \in \SL \} \equiv \SL / \U, \quad e_3 =  \left(\begin{array}{cc}
	1 & 0\\
	0 &  -1
\end{array}\right),
\]
where $\SL$ denotes the {\em special linear group}
\[
\SL = \left\{  z=   \left( \begin{array}{cc}
	z_{11} & z_{12}\\
	z_{21}& z_{22} 
\end{array} \right) \in \C^4 \colon  \det z =   z_{11} z_{22} - z_{12} z_{21}  = 1 \right\},
\] 
\begin{equation}\label{eq:u1}
	\rm{U}_1 = \left\{ A_{\theta} = \left( \begin{array}{cc}
		e^{\igot \theta} & 0 \\
		0 & e^{-\igot \theta}
	\end{array}\right) \colon \theta \in \R
	\right\} \equiv \mathbb{S}^1 \equiv   [0, 2 \pi] / \{ 0, 2 \pi\}
\end{equation}
the {\em unitary subgroup} of $\SL$, and $\igot = \sqrt{-1}$.
The canonical projection 
\[
\begin{split}
\pi : 	\SL & \to  \UH  \equiv \SL / \U\\
	A & \mapsto (A\overline{A}^T , A e_3  \overline{A}^T)
\end{split}
\]
is a proper map which takes {\em holomorphic Legendrian curves} in $\SL$ into conformal Legendrian immersions in $\UH$; note that $\U$ is compact and see Subsection \ref{subsec:leguh3} for details. Recall that a holomorphic Legendrian curve $F : M \to \SL$ from an open Riemann surface $M$ is a holomorphic immersion which is tangent to the standard holomorphic contact form of $\SL$ given by
 \[
\eta_{\rm{SL}} = z_{11} d z_{22} - z_{21} d z_{12}.
\]
To this respect, Alarcón showed in 2017 that every open Riemann surface $M$ admits a proper holomorphic Legendrian embedding $M \to \SL$, see \cite[Theorem 1.1]{Alarcon2017JGA}. 
This already yields a proper conformal Legendrian immersion $M \to \UH$ for every open Riemann surface $M$. 
However, we emphasize that being embedded in $\SL$ is clearly not enough to obtain embeddings when passing to the quotient $\UH \equiv \SL / \U$.

To deal with this we define the map
\[
\begin{split}
	\Delta F : \R \times M  \times M & \to \C^4\\
	(\theta, p, q) &  \mapsto  \Delta F (\theta, p, q) = F(q) -F(p)  A_{\theta}.
\end{split}
\]
We prove the following. 
%
%
\begin{theorem}\label{th:mtleg}
Every open Riemann surface $M$ admits a proper holomorphic Legendrian curve $F : M \to \SL$ such that 
\[\Delta F (\theta, p, q) = 0
\]
 if and only if $\theta \in 2 \pi \Z$ and $p = q$.
\end{theorem}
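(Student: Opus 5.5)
The plan is to follow the scheme of \cite[Theorem 1.1]{Alarcon2017JGA}: build $F$ as the limit of a sequence of holomorphic Legendrian curves $F_j$ on an exhaustion of $M$, and add a general position step that controls the \emph{twisted difference map} $\Delta F$ at every stage. Fix an exhaustion $K_1\subset K_2\subset\cdots$ of $M$ by smoothly bounded Runge compact domains. Construct holomorphic Legendrian curves $F_j : U_j \to \SL$ on neighbourhoods of $K_j$ satisfying three conditions. First, $F_j$ is close to $F_{j-1}$ on $K_{j-1}$ in the $\Cscr^1$ sense. Second, $|F_j| > j$ on $bK_j$ and $|F_j|>j-1$ on $K_j\setminus \mathring K_{j-1}$, which gives properness. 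Third, $\Delta F_j(\theta,p,q)\neq 0$ for $(\theta,p,q) \in \R\times K_j\times K_j$ unless $\theta\in 2\pi\Z$ and $p=q$. The approximation and properness steps are taken from Alarcón's proof, namely the Runge--Mergelyan theorem for holomorphic Legendrian curves in $\SL$ together with the technique for pushing the boundary to infinity. Since $\Delta F$ is $2\pi$-periodic in $\theta$, we may take $\theta\in\S^1$, so that $\S^1\times K_j\times K_j$ is compact. Condition three is then open in the $\Cscr^1$ topology away from the diagonal set $D=\{0\}\times\{(p,p)\}$. Near $D$ we need a local injectivity statement: $F(q)=F(p)A_\theta$ with $(\theta,q)$ close to $(0,p)$ forces $\theta=0$ and $q=p$. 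This holds as long as the immersion $F$ is transverse to the orbits of the right $\U$-action, that is, $dF_p(T_pM)\not\ni F(p)\,\mathrm{diag}(\igot,-\igot)$. Since the orbit direction is a real line and $dF_p(T_pM)$ is a complex line, this transversality can also be achieved by a generic perturbation; we include it as part of condition three. Passing to the limit with sufficiently fast approximation then preserves all three conditions, as in the standard argument.

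The core of the proof is the general position lemma. Given a holomorphic Legendrian curve $F$ on a neighbourhood of a compact Runge domain $K$, we want to approximate it by one satisfying condition three on $K$. Use a holomorphic family of Legendrian curves $F_\zeta$, $\zeta\in\C^N$ near $0$, with $F_0=F$, obtained by period-dominating sprays as in the Legendrian Mergelyan theorem. We need the map $(\zeta,\theta,p,q)\mapsto \Delta F_\zeta(\theta,p,q)$ to be a submersion at $\zeta=0$ for all $(\theta,p,q)$ off a neighbourhood $W$ of $D$ in $\S^1\times K\times K$. Once that holds, transversality (Abraham's parametric transversality theorem) gives, for generic small $\zeta$, that $\Delta F_\zeta$ misses $0$ off $W$. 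The source has real dimension $1+2+2=3$ before we add $\zeta$, and the target $\C^4\cong\R^8$, or rather the relevant constraint set of real dimension $6$, since $F(q)$ and $F(p)A_\theta$ both lie in the $6$-dimensional manifold $\SL$. So a codimension count gives $5<6$, and the zero set is empty generically. This is the extra codimension that the introduction alludes to. On $W$ itself we use the local injectivity discussed above, which is stable under small perturbation.

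The main obstacle is building sprays that are submersive at \emph{every} off-diagonal point, and especially at points $(\theta,p,q)$ with $p=q$ but $\theta\neq 0$. In that case $\Delta F = F(p)(\mathrm{Id}-A_\theta)$, and the variation of $F(q)-F(p)A_\theta$ in $\zeta$ is $\delta F(p)(\mathrm{Id}-A_\theta)$, where $\delta F(p)$ ranges over Legendrian deformations at $p$. Because $F(p)$ already satisfies $\det=1$, $F(p)(\mathrm{Id}-A_\theta)$ never vanishes when $\theta\notin 2\pi\Z$. So the dangerous set is really only near $D$, and points with $p=q$, $\theta\ne0$ are harmless. For $p\neq q$ I would first try to realise independent Legendrian variations at $p$ and $q$. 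One way is to work in a local contact chart (Darboux coordinates $\C^3$ for $\SL$ via a contactomorphism) and use the fact that Legendrian curves are determined by two holomorphic functions $(x,y)$ with $z=\int y\,dx$. Perturbations $x+\sum\zeta_i g_i$, $y+\sum \zeta_i h_i$ with functions $g_i,h_i$ peaking at $p$ or $q$ (Runge approximation), plus period corrections, span the tangent space of $\SL$ at $F(q)$ independently of $F(p)$. Compactness of the off-diagonal set, after removing $W$, then lets finitely many such spray directions suffice.
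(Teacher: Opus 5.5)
Your overall scheme matches the paper's architecture: an exhaustion, Alarcón's approximation and properness steps, a general position lemma via sprays and Abraham's transversality theorem with the count $5<6$, local injectivity near the zero set, and a limit argument. But the general position lemma, which is the heart of the proof, is not established. You flag the construction of suitable sprays in $\SL$ as the main obstacle and only sketch an attempt, and that sketch does not work as stated. A Darboux chart is local and $F(K)$ is in general not contained in one. So perturbing $(x,y)$ by functions peaking at $p$ and $q$, and recovering $z$ by integration with period corrections, does not define global Legendrian deformations of $F$ on $K$; for that you need a global $\C^3$-model for Legendrian curves near $F$. Moreover, the requirement that $\zeta\mapsto\Delta F_\zeta(\theta,p,q)$ be a submersion fails exactly where it is needed. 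At a zero, $F(q)=F(p)A_\theta$, so every derivative $\dot F(q)-\dot F(p)A_\theta$ lies in the complex hyperplane $T_{F(q)}\SL$ of $\C^4$. One must instead ask for transversality of $(F_\zeta(q),F_\zeta(p)A_\theta)$ to the diagonal of $\SL\times\SL$, i.e.\ that these derivatives span $T_{F(q)}\SL$ for every $\theta$. You give no sprays with this property.

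The missing idea is that such a global model is provided by $\Dcal$. It satisfies $\Dcal^*\eta_{\mathrm{SL}}=\eta_0$ and, crucially, turns the $\U$-action into a translation: $\Dcal(z)A_\theta=\Dcal(z_1,z_2,z_3-\igot\theta)$. Moreover, $\Dcal(a)=\Dcal(b)$ if and only if $a-b\in\Lambda=\{(0,0,2k\pi\igot)\colon k\in\Z\}$. Seeking $F=\Dcal\circ X$, you get that $\Delta F(\theta,p,q)=0$ if and only if $X(q)-X(p)+(0,0,\igot\theta)\in\Lambda$. This is a problem in $\C^3$ with a discrete target. The sprays of Lemma \ref{lemma:fixed}, which only make $\partial_t\big(H(q,t)-H(p,t)\big)$ surjective, then work verbatim, because the term $\igot\theta$ does not depend on $t$. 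Near the diagonal, $(X_1,X_2)$ is an immersion since $dX_3+X_1dX_2=0$, so its local injectivity excludes zeros with $p\neq q$ close to each other for all $\theta$ at once. Keeping every curve of the form $\Dcal\circ X$ also makes Alarcón's properness lemma, formulated for $\max\{|X_1|,-\Re X_3\}$, directly applicable at each step. Finally, transversality of $F$ to the $\U$-orbits is not something to be achieved by generic perturbation: it is automatic. Indeed $\eta_{\mathrm{SL}}\big(A\,\mathrm{diag}(\igot,-\igot)\big)=-\igot\det A\neq0$, so the orbit direction never lies in the contact plane. Your real-line-versus-complex-line count ignores that Legendrian tangent lines are confined to that plane.
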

Note that a curve $F$ satisfying the conclusion of Theorem \ref{th:mtleg} is in particular injective. Since a proper injective immersion in $\SL$ is an embedding, this result generalizes  \cite[Theorem 1.1]{Alarcon2017JGA}. Theorem \ref{th:mt} follows as an immediate corollary of Theorem \ref{th:mtleg} after projecting the curves to the quotient $\UH \equiv \SL / \U$. 
We remark that since the Legendrian embeddings $M \to \UH$ obtained in Theorem \ref{th:mt} are projections of proper holomorphic Legendrian embeddings $M \to \SL$, they project to proper, weakly complete, flat fronts $M \to \h^3$ whose canonical complex structure agrees with that of $M$; see \cite{KokubuUmeharaYamada2004}.

Let us mention another consequence of Theorem \ref{th:mtleg}. 
Consider the quotients $\SL / C_n$, where 
\[
C_n = \{ A_{2 \pi k/n} \colon 1 \le k \le n, \, k \in \N\}, \quad n \in \N  =  \{ 1, 2, 3, \hdots\},
\]
denote the cyclic subgroups of $\SL$, see \eqref{eq:u1}.  These are complex contact manifolds with the contact structure induced from $\SL$. 
Note that $\SL /C_2$ corresponds to the projectivisation $\PSL = \SL / \{\pm \id\}$. Emerging also in the analytic representation of flat surfaces in $\h^3$, holomorphic Legendrian curves in $\PSL$ are of great interest as they provide an integral-free formula to describe these surfaces, see \cite{GalvezMartinezMilan2000MA, KokubuUmeharaYamada2003}. 
We obtain the following immediate consequence of Theorem \ref{th:mtleg} after projecting the holomorphic Legendrian curves in $\SL$ to the different quotients $\SL/C_n$. 
%
%
\begin{corollary}
Every open Riemann surface admits a proper holomorphic Legendrian embedding in $\SL /C_n$ for every $n \in \N$. In particular, every open Riemann surface admits a proper holomorphic Legendrian embedding in $\PSL$.
\end{corollary}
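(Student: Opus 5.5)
The corollary follows from Theorem \ref{th:mtleg} by passing to the quotient. Fix an open Riemann surface $M$ and let $F : M \to \SL$ be the proper holomorphic Legendrian curve given by Theorem \ref{th:mtleg}. For $n \in \N$ let $\pi_n : \SL \to \SL/C_n$ be the quotient map; we show that $\pi_n \circ F$ is the desired embedding. Since $C_n$ is a finite subgroup acting freely by right multiplication, $\SL/C_n$ is a complex manifold and $\pi_n$ is an unbranched holomorphic covering map of degree $n$. The holomorphic contact structure $\ker \eta_{\rm SL}$ is invariant under right multiplication by elements of $C_n$. Indeed, for $A_\theta$ diagonal, $z \mapsto z A_\theta$ sends $z_{11}\mapsto e^{\igot\theta}z_{11}$, $z_{21}\mapsto e^{\igot\theta}z_{21}$, $z_{12}\mapsto e^{-\igot\theta}z_{12}$ and $z_{22}\mapsto e^{-\igot\theta}z_{22}$, so $\eta_{\rm SL}$ is preserved. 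Hence $\ker \eta_{\rm SL}$ descends to the contact structure of $\SL/C_n$, and $\pi_n$ is a local contactomorphism.

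\emph{Step 1 (Legendrian immersion).} Because $\pi_n$ is a local biholomorphism that preserves the contact distributions, $\pi_n \circ F$ is a holomorphic immersion tangent to the contact structure. So it is a holomorphic Legendrian curve in $\SL/C_n$.

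\emph{Step 2 (injectivity).} Suppose $\pi_n(F(p)) = \pi_n(F(q))$. Then $F(q) = F(p) A_{2\pi k/n}$ for some $k$, which means $\Delta F(2\pi k/n, p, q) = 0$. By Theorem \ref{th:mtleg}, $2\pi k/n \in 2\pi\Z$ and $p = q$. This is the only place where the full strength of the condition on $\Delta F$ is used, and only at the finitely many angles $\theta \in \frac{2\pi}{n}\Z$.

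\emph{Step 3 (properness and embedding).} The map $\pi_n$ is proper, since it is a finite covering: the preimage of a compact set $K$ is a closed set covered by finitely many translates of compact pieces. Hence, if $K \subset \SL/C_n$ is compact, then $(\pi_n\circ F)^{-1}(K) = F^{-1}(\pi_n^{-1}(K))$ is compact because $F$ is proper. A proper injective immersion is an embedding, so $\pi_n \circ F$ is a proper holomorphic Legendrian embedding.

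Since $C_2 = \{\pm \id\}$, the case $n=2$ gives the statement for $\PSL$. There is no real obstacle here; the only point to verify carefully is the invariance of $\eta_{\rm SL}$ under right multiplication by $C_n$, which ensures that the contact structure descends.
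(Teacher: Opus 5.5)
Your proof is correct and takes the same route as the paper. The paper obtains the corollary as an immediate consequence of Theorem~\ref{th:mtleg} by projecting $F$ to $\SL/C_n$, and you have supplied the routine details: invariance of $\eta_{\rm SL}$ under right multiplication by $C_n$, injectivity from $\Delta F(2\pi k/n,p,q)\neq 0$ for $p\neq q$, and properness of the finite covering $\SL\to\SL/C_n$.
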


\textit{Strategy to prove Theorem \ref{th:mtleg}.} As in \cite{Alarcon2017JGA}, our approach relies on the approximation theory for holomorphic Legendrian curves in $\C^3$ developed by Alarcón, Forstneri\v c, and López in \cite{AlarconForstnericLopez2017CM}, where $\C^3$ is endowed with its standard holomorphic contact form
\[
\eta_0 = d z_3 + z_1 d z_2,
\]
and $(z_1, z_2, z_3)$ denote the complex coordinates on $\C^3$.
We shall use the holomorphic map $\Dcal : \C^3 \to \SL$ introduced by Martín, Umehara, and Yamada \cite{MartinUmeharaYamada2014} and given by
\begin{equation}\label{eq:d}
	\Dcal(z_1, z_2, z_3)=\left(\begin{matrix}
		e^{-z_3} & z_1 e^{z_3}\\
		z_2 e^{-z_3} &  (1 + z_1 z_2) e^{z_3}
	\end{matrix}\right),\quad (z_1,z_2,z_3)\in \C^3,
\end{equation}
which carries holomorphic Legendrian curves $M \to \C^3$ into holomorphic Legendrian curves $M \to \SL$.
We remark that $\Dcal$ is neither proper nor injective, see \cite[Remark 4.1]{Alarcon2017JGA}. 

Consider an open Riemann surface $M$. We shall use holomorphic sprays and a transversality argument as in \cite[Proof of Lemma 4.3]{AlarconForstnericLopez2017CM}, but adding a new real parameter $\theta \in \R$ and exploiting the fact that $\dim (\R\times  M \times M ) < \dim (\C^3)$, to construct a holomorphic Legendrian curve $X= (X_1, X_2, X_3) : M \to \C^3$ such that the map
\begin{equation}\label{eq:thetadifleg}
	\begin{split}
		\delta^{\Dcal} X : 	\R \times   M \times  M  & \to \C^3\\
		(\theta, p,q) & \mapsto  (X_1 (q) -  X_1 (p), X_2 (q) - X_2 (p), e^{- X_3 (q)}    - e^{\igot \theta} e^{- X_3 (p)}   )  \\
	\end{split}
\end{equation}
vanishes only when $\theta \in 2 \pi \Z$ and $p=q$. Having established this property, it is straightforward to check that $F = \Dcal \circ X : M \to \SL$ satisfies that $\Delta F (\theta, p ,q)= 0$ if and only if  $\theta \in 2 \pi \Z$ and $p=q$. To ensure that $F$ can be chosen proper as well, we argue as in \cite{Alarcon2017JGA}.

\textit{Organization of the paper.} In Section \ref{sec:prel} we provide some background and introduce notation and a preliminar lemma. In Section \ref{sec:genpos} we prove a general position result for holomorphic Legendrian curves in $\C^3$. This provides the main technical tool to prove Theorem \ref{th:mtleg}, a task that we deal with in Section \ref{sec:proofleg}.

%
%
\section{Background and preliminaries}\label{sec:prel}
%
%
\subsection{Legendrian surfaces in $\UH$}\label{subsec:leguh3}
Here we review the contact metric structure of $\UH$ and the canonical projection $\pi : \SL \to \UH$. 
We consider the hyperboloid model for the hyperbolic space 
\[
\h^3 = \{ x= (x_0,x_1, x_2, x_3) \in \mathbb{L}^4 \colon  \langle x, x \rangle = -x_0^2 + x_1^2 + x_2^2 + x_3^2 = -1 , \, x_0 >0\}
\]
and its tangent sphere bundle
\[
\UH = \{ (x, v) \in \mathbb{L}^4 \times \mathbb{L}^4 \colon - \langle x, x \rangle = \langle v,v \rangle = 1, \, \langle x, v \rangle = 0, \,  x_0 >0 \},
\]
where $\langle \cdot, \cdot \rangle$ is the Lorentzian metric of Minkowski space $\mathbb{L}^4$ with signature $(-+++)$. We fix global coordinates $(x, v)$ on $\UH$ and express the {\em Sasaki metric} and {\em canonical contact form} as
\[
d \sigma^2 = \langle d x, d x \rangle + \langle d v , d v \rangle + \eta^2 , \quad \eta = \langle d x, v \rangle,
\]
respectively, note that $\eta^2 = \eta \otimes \eta$ and that we view the tangent bundle $T \UH$ as a subset of $\mathbb{L}^4 \times \mathbb{L}^4$; see \cite{Calvaruso2005, Blair2010}. Given an immersion $L = (f, \nu) : M \to \UH$ from a smooth surface $M$, the pullback of the Sasaki metric and contact form are given by 
\[
 L^* d \sigma^2  =  \langle d f, d f \rangle + \langle d \nu  , d \nu \rangle  +  \langle d f, \nu \rangle^2 , \quad L^* \eta = \langle d f, \nu \rangle.
 \]
Now suppose that we have a holomorphic Legendrian curve $F : M \to \SL$ from an open Riemann surface $M$. It is well known that we can write
\[
F^{-1} d F = \left(\begin{array}{cc}
		0 & \theta\\
		\omega & 0
	\end{array}\right)
\]
for a pair of holomorphic $1$-forms $\theta$ and $\omega$ on $M$, see \cite{GalvezMartinezMilan2000MA, KokubuUmeharaYamada2004}.  Identify $\mathbb{L}^4$ with the space of $2\times2$ hermitian matrices ${\rm Her}_2$ by
\[
\mathbb{L}^4 \ni (x_0, x_1, x_2, x_3) \longleftrightarrow \sum_{k=0}^3 x_k e_k =   \left(  \begin{array}{cc}
x_0 + x_3 & x_1 + \igot x_2 \\
x_1 - \igot x_2 & x_0- x_3
\end{array} \right) \in {\rm Her}_2,
\]
where
\[
e_0 = \left(\begin{array}{cc}
1 & 0\\
0 & 1
\end{array}\right), \quad 
e_1 = \left(\begin{array}{cc}
0 & 1\\
1 & 0
\end{array}\right), \quad 
e_2 = \left(\begin{array}{cc}
0 & \igot \\
-\igot  & 0
\end{array}\right), \quad 
e_3=  \left(\begin{array}{cc}
1 & 0\\
0 & -1
\end{array}\right).
\]
The Lorentzian metric becomes 
\begin{equation}\label{eq:lorentz}
\langle A , B \rangle =  (-1/2) \mathrm{Trace}(A e_2 B^T e_2)  \text{ for all } A, B \in {\rm Her}_2,
\end{equation}
in particular $\langle A , A \rangle  = - \det (A)$. The tangent sphere bundle of $\h^3$ now is
\[
\UH =   \{ (A \overline{A}^T, A e_3 \overline{A}^T) \colon A \in \SL \} \equiv \SL / \U,
\]
recall \eqref{eq:u1}, and we have a canonical projection $\pi : \SL \to \UH$.  Set 
\[
L = \pi \circ F = (F \overline F^T ,  F e_3 \overline F^T )  : M \to \UH.
\]
By a direct computation using \eqref{eq:lorentz} and taking into account that conjugation by matrices in $\SL$ are isometries of $({\rm Her}_2, \langle \cdot, \cdot \rangle)$, we obtain that
\[
L^* d \sigma^2  = |\theta|^2 + |\omega|^2, \quad L^* \eta = 0,
\]
here $|\cdot |$ is the Euclidean norm on $\C$.
Hence the projection $\pi : \SL \to \UH$ carries holomorphic Legendrian curves into conformal Legendrian surfaces. 

%
%
\subsection{Notation}
Given a set $S$ we shall denote its {\em diagonal} by $D_S = \{ (p,p) \in S \times S\}$. 
We shall denote the Euclidean norm on $\C^n$ by $|\cdot |$, for every $n \in \N$.  
Let $M$ be an open Riemann surface,  $N$ a complex manifold, and $\varnothing\neq S \subset M$ a subset. 
We denote by $\Cscr^r(S,N)$, $r\in \{0\}\cup \N$,  the space of all maps $S\to N$ of class $\Cscr^r$ on a neighborhood of $S$ in $M$ and write $\Cscr^r (S) = \Cscr^r (S, \C)$.
If $f \in \Cscr^r (S)^n$ ($n \in \N$) and $S$ is compact, we denote by $\| f \|_{r,S}$ the $\Cscr^r$-maximum norm of $f$ in $S$.
We write $\Oscr (S, N)$ for the space of all holomorphic maps from some neighborhood of $S$ in $M$ into $N$. We set $\Oscr(S) = \Oscr(S, \C)$ and
\[
\Ascr^r (S, N) = \Cscr^r (S, N) \cap \Oscr(\mathring{S}, N) ,  \quad \Ascr^r (S)= \Ascr^r(S, \C).
\]
We shall omit the target from the notation $\Oscr (S, N)$ and $\Ascr^r (S, N)$ when it is clear from the context.
A {\em bordered Riemann surface} is an open Riemann surface $M$ which is the interior of a compact one dimensional complex manifold $\om = M \cup bM$ with smooth boundary $b M \not = \varnothing$ consisting of finitely many closed Jordan curves.
Recall that any bordered Riemann surface is biholomorphic to a relatively compact smoothly bounded domain in a larger Riemann surface \cite[Theorem 8.1]{Stout1965TAMS}.
If $S$ is a subset of an open Riemann surface $R$, we say that a map $S \to \C^3$ is a holomorphic Legendrian curve if it extends to a holomorphic Legendrian curve on an open neighborhood of $S$ in $R$.
Given  a compact bordered Riemann surface $\om$ we will say that $X : \om \to \C^3$ is a Legendrian curve of class $\Ascr^r (\om)$ if it is a holomorphic Legendrian curve in $M$ and $X \in \Ascr^r (\om, \C^3)$. 
 The same terminology will apply to Legendrian curves in $\SL$. 
We recall that a compact subset $K$ of an open Riemann surface $M$ is {\em Runge} if every function in $\Ascr ( K)$ may be approximated uniformly on $K$ by holomorphic functions defined on $M$. By the Runge-Mergelyan theorem \cite[Theorem 5]{FFW18} this is equivalent to $M \setminus K$ having no relatively compact connected components.

%
%
\subsection{Holomorphic sprays of Legendrian curves}

Let us record here the following tool to construct embedded Legendrian curves in $\C^3$, introduced in \cite[Proof of Lemma 4.4]{AlarconForstnericLopez2017CM}.
\begin{lemma}\label{lemma:fixed}
Let $\om$ be a smoothly bounded compact domain contained in an open Riemann surface and $X  : \om \to \C^3$ a holomorphic Legendrian curve. For any fixed $(p_0, q_0) \in \om \times \om \setminus D_{\om}$ there exists a holomorphic map $H  : \om \times \C^3 \to \C^3$ and an open neighborhood $\Omega$ of $0$ in $\C^3$ satisfying the following properties:
\begin{enumerate}[label = \rm (\roman*)]
	\item \label{coreA} $H(\cdot, 0) = X$.
	\item \label{legendrianA} $H (\cdot, t) : \overline M \to \C^3$ is a holomorphic Legendrian curve for all $t \in \Omega$.
	\item \label{thetadomA} The map $\delta_0 H :  \om \times \om \times \C^3 \to \C^3$ defined by 
	\[
	 \delta_0 H (p,q, t)  =  H (q, t) - H (p, t)
	\] 
	is such that the partial differential
	\[
	 \partial_{t} \big|_{t = 0}  \delta_0 H (p_0 , q_0, t) : \C^3 \to \C^3
	\]
	is surjective.

\end{enumerate}
\end{lemma}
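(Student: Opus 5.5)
The plan is to follow the spray construction of \cite[Proof of Lemma 4.4]{AlarconForstnericLopez2017CM}: deform $X$ by composing it with flows of holomorphic contact vector fields on $\C^3$. Each such deformation keeps the curve Legendrian, so the work reduces to choosing three deformations whose effect on $X(q_0)-X(p_0)$ spans $\C^3$.

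\textbf{Contact vector fields.} Every holomorphic function $g$ on $\C^3$ determines a holomorphic contact vector field $V_g$ for $\eta_0 = dz_3 + z_1 dz_2$. Its flow $\phi^g_s$ satisfies $(\phi^g_s)^*\eta_0 = \lambda_s \eta_0$ for a nonvanishing function $\lambda_s$. Hence $\phi^g_s \circ X$ is again a holomorphic Legendrian curve on $\om$ for all $s$ in a neighbourhood of $0$; compactness of $X(\om)$ guarantees that the flow is defined there.

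\textbf{Choosing the fields.} Set $a = X(p_0)$ and $b = X(q_0)$. I would choose $g_1, g_2, g_3$ so that the three vectors $V_{g_j}(b) - V_{g_j}(a)$ span $\C^3$. There are two cases.
\begin{itemize}
\item If $a \neq b$, the map $g \mapsto V_g$ is linear, and the values of $V_g$ together with its $1$-jet can be prescribed independently at the two distinct points $a$ and $b$. One may therefore take $g_j$ with $V_{g_j}(a) = 0$ and $V_{g_j}(b) = e_j$. In coordinates, $V_g$ is an explicit first-order expression in $g$ and $\partial g$, and the contact condition makes the map from $1$-jets of $g$ to values of $V_g$ onto.
\item If $a = b$, which happens when $X$ fails to be injective, flows of ambient fields cannot separate $p_0$ from $q_0$. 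In that case I would first make a preliminary Legendrian perturbation of $X$ that separates $X(p_0)$ from $X(q_0)$, and only then apply the construction above.
\end{itemize}

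\textbf{The spray.} Define
\[
H(p,t) = \phi^{g_1}_{t_1} \circ \phi^{g_2}_{t_2} \circ \phi^{g_3}_{t_3}(X(p)), \qquad t = (t_1, t_2, t_3).
\]
Then \ref{coreA} and \ref{legendrianA} hold on a small neighbourhood $\Omega$ of $0$. Moreover $\partial_{t_j}|_{t=0}\, \delta_0 H(p_0, q_0, t) = V_{g_j}(b) - V_{g_j}(a)$, so the surjectivity in \ref{thetadomA} follows from the choice of the $g_j$. The statement asks for $H$ defined and holomorphic on all of $\om \times \C^3$, which the local flow construction does not give directly. To obtain it, I would replace the flows by a globally defined holomorphic family that agrees with the original one to first order at $t=0$. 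The Legendrian condition is then only needed for $t \in \Omega$, where it holds.

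\textbf{Main obstacle.} The hard step is the case $X(p_0) = X(q_0)$. There I would instead use the intrinsic deformations of \cite{AlarconForstnericLopez2017CM}: vary $X_1$ and $X_2$ by adding functions $t_j h_j$ on $\om$ that separate $p_0$ from $q_0$, and recover $X_3$ from the Legendrian condition $dX_3 = -X_1\, dX_2$. The new difficulty is that $X_3(q_0) - X_3(p_0)$ is now a line integral of $-X_1\, dX_2$ along a path from $p_0$ to $q_0$. To keep $X_3$ single-valued, the $h_j$ must be chosen so that the periods of $X_1\, dX_2$ over a homology basis of $\om$ stay at $0$. I would achieve this with a period-dominating spray, together with one extra parameter that controls the integral along the path from $p_0$ to $q_0$; that parameter yields the third direction needed for surjectivity.
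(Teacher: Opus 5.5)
Your proposal is correct in substance. However, the argument that actually carries the lemma is the one in your last paragraph, and that is the construction the paper relies on. The paper gives no proof of its own; it quotes the spray from \cite[Proof of Lemma 4.4]{AlarconForstnericLopez2017CM}, which is built intrinsically. One deforms $(X_1,X_2)$ by functions on $\om$ and sets $H_3(p,t)=X_3(p_0)-\int_{p_0}^{p}H_1(\cdot,t)\,dH_2(\cdot,t)$. The periods over a homology basis $C_1,\dots,C_l$ are kept at zero by a period-dominating spray and the implicit function theorem. The third direction comes from the integral over an arc $E$ from $p_0$ to $q_0$, chosen so that $\bigcup_j C_j\cup E$ is Runge.

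This works for every $(p_0,q_0)\notin D_{\om}$, whether or not $X(p_0)=X(q_0)$. So your case split is legitimate (sprays of both kinds can be composed) but unnecessary. Your contact-flow spray $\Phi_t\circ X$ is a genuinely different and cleaner device when $X(p_0)\neq X(q_0)$, since it needs no periods, arcs or Mergelyan approximation. But it is blind at double points: there $\partial_t|_{t=0}\delta_0H=V_g(X(q_0))-V_g(X(p_0))=0$ for every $g$. Such pairs are exactly where Lemma~\ref{lemma:theta} needs submersivity in order to remove self-intersections.

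Two side remarks should be dropped.
\begin{enumerate}
\item A preliminary perturbation of $X$ changes $H(\cdot,0)$ and so violates \ref{coreA}. Building it into the spray does not help, for the reason just given.
\item Replacing the local flows by a global family that agrees with them only to first order at $t=0$ need not be Legendrian for $t\neq0$, so \ref{legendrianA} would be lost. The domain $\om\times\C^3$ in the statement is nominal, since the proof of Lemma~\ref{lemma:theta} only uses $t$ near $0$. Simply take $H$ on $\om\times\Omega$.
\end{enumerate}

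What your last paragraph still has to verify is a joint submersivity. Write $x_t,y_t$ for the deformed first two components. You need the map $t\mapsto\big(\int_{C_1}x_t\,dy_t,\dots,\int_{C_l}x_t\,dy_t,\int_E x_t\,dy_t,\,x_t(q_0)-x_t(p_0),\,y_t(q_0)-y_t(p_0)\big)$ to be submersive at $t=0$. This follows from Mergelyan's theorem on $\bigcup_jC_j\cup E$ together with $dX_2\not\equiv0$; if $X_2$ is constant, deform $X_2$ and use $dX_1\not\equiv0$ instead. This joint submersivity is what keeps $\partial_t|_{t=0}\delta_0H$ onto $\C^3$ after restricting to the zero set of the periods.
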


%
%
\section{A general position result for Legendrian curves}\label{sec:genpos}
\noindent

This section is devoted to establish the following lemma.
%
%
%
\begin{lemma}\label{lemma:theta}
Let $\overline{M}$ be a compact bordered Riemann surface. Every Legendrian curve $X  : \overline M \to \C^3$ of class $\Ascr^1 (\overline{M})$ may be approximated in the $\Cscr^1 (\overline{M})$-norm by Legendrian curves $\wt X = (\wt X_1, \wt X_2, \wt X_3)  : \om \to \C^3$ of class $\Ascr^1 (\overline M)$ such that the map $\delta^{\Dcal} \wt X : [0, 2 \pi]  \times \om \times \om \to \C^3$
does not vanish on $ [0, 2 \pi] \times (\om \times \om \setminus D_{\overline M})$.
\end{lemma}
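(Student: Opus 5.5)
We will use the standard transversality scheme of \cite[Proof of Lemma 4.4]{AlarconForstnericLopez2017CM}, now with the extra real parameter $\theta$. The idea is to build a holomorphic Legendrian spray $H:\om\times\C^N\to\C^3$ with $H(\cdot,0)=X$ whose associated difference map
\[
\Phi(\theta,p,q,t)=\delta^{\Dcal}H(\cdot,t)(\theta,p,q)
\]
is a submersion in $t$ at $t=0$ along $[0,2\pi]\times(\om\times\om\setminus D_{\om})$. Since the real dimension of $[0,2\pi]\times\om\times\om$ is $5<6=\dim_\R\C^3$, the parametric transversality theorem will then give, for generic small $t$, a map $\Phi(\cdot,\cdot,\cdot,t)$ that misses $0$ on that set. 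We set $\wt X=H(\cdot,t)$ for such a $t$, which is $\Cscr^1$-close to $X$ when $t$ is small.

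The key steps are the following.

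\begin{enumerate}
\item \emph{Localizing near the diagonal.} Near a diagonal point, $\delta^{\Dcal}$ with $p\ne q$ close can only vanish if $\theta$ is close to $2\pi\Z$. There we rewrite the third component as
\[
e^{-X_3(q)}-e^{-X_3(p)}+(1-e^{\igot\theta})e^{-X_3(p)}.
\]
The pair $(X_1,X_2)$ together with $e^{-X_3}$ is an injective immersion locally whenever $X$ is, because $z_3\mapsto e^{-z_3}$ is locally biholomorphic. We first apply \cite[Lemma 4.4]{AlarconForstnericLopez2017CM} to assume $X$ is an embedding. Then a compactness argument shows that for $X$ and all small perturbations, $\delta^{\Dcal}$ does not vanish on $[0,2\pi]\times(U\setminus D_{\om})$ for some neighborhood $U$ of $D_{\om}$. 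For the perturbations this uses uniform $\Cscr^1$ control: the difference quotient $(X_1,X_2)(q)-(X_1,X_2)(p)$ is bounded below by $c\,\mathrm{dist}(p,q)$ wherever $dX_1,dX_2$ do not both vanish. At points where $d(X_1,X_2)=0$, the Legendrian condition $dX_3=-X_1dX_2$ forces $dX_3=0$, contradicting immersion. So $(X_1,X_2)$ is locally an immersion, and this step works.
\item \emph{Spray at a single point.} For a fixed $(\theta_0,p_0,q_0)$ with $p_0\ne q_0$, we take the spray $H$ from Lemma \ref{lemma:fixed}. The first two components of $\partial_t\delta^{\Dcal}H$ equal the first two components of $\partial_t\delta_0H$. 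The third is
\[
-e^{-X_3(q_0)}\,\partial_tH_3(q_0)+e^{\igot\theta_0}e^{-X_3(p_0)}\,\partial_tH_3(p_0).
\]
Surjectivity of $\partial_t\delta_0 H$ alone does not immediately give this. I would therefore revisit the construction in Lemma \ref{lemma:fixed}, where the periods are adjusted by functions supported near an arc from $p_0$ to $q_0$. This lets us make $\partial_t H_3(p_0)=0$ and $\partial_t H_3(q_0)$ arbitrary, while independently prescribing $\partial_t(H_1,H_2)$ at $q_0$ and keeping it zero at $p_0$. The resulting derivative is then clearly surjective. Alternatively, we can compose the spray with the real-analytic family in $\theta$ and note that only the $e^{-X_3}$-weights differ, which are nonzero.
\item \emph{Covering the compact set.} Submersivity is open, so it holds on a neighborhood of each point. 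We cover the compact set $[0,2\pi]\times(\om\times\om\setminus U)$ by finitely many such neighborhoods and compose the corresponding sprays, as in \cite{AlarconForstnericLopez2017CM}. This gives a single spray with parameter in $\C^{3k}$ that is submersive on the whole set.
\item \emph{Conclusion.} By Sard's theorem applied to $\Phi^{-1}(0)$, which is a submanifold of codimension $6$, the generic $t$ near $0$ gives a nonvanishing map on the compact part. Step 1 handles the part near the diagonal.
\end{enumerate}

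The main obstacle is Step 2: ensuring that the third component, with its $\theta$-dependent weighting $e^{\igot\theta}e^{-X_3(p)}$ coming from $\Dcal$, is still reached by the spray derivatives. The plan is to insist that the spray variations be supported near $q_0$ only (vanishing at $p_0$). That removes the $\theta$-dependence from the derivative, leaving $\partial_t$ equal to $d(z_1,z_2,e^{-z_3})$ applied to $\partial_tH(q_0)$, which is invertible.
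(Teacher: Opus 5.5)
Your overall scheme is the paper's: a spray that is submersive on the compact part away from the diagonal, Sard's lemma, and local injectivity of $(X_1,X_2)$ near the diagonal. Your Step 1 is the paper's argument, and you are more careful than the paper about uniformity under $\Cscr^1$-small perturbations. The preliminary reduction to an embedding via \cite[Lemma 4.4]{AlarconForstnericLopez2017CM} is unnecessary.

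The real difference is how you handle the $\theta$-dependent weight in the third component. The paper takes logarithms: $\delta^{\Dcal}X(\theta,p,q)=0$ if and only if $(X_1(q)-X_1(p),X_2(q)-X_2(p),X_3(q)-X_3(p)+\igot\theta)$ lies in the discrete set $\Lambda=\{(0,0,2k\pi\igot)\colon k\in\Z\}$. Now $\theta$ enters additively, so the $t$-derivative of this map equals $\partial_t\delta_0H$ for every $\theta$. Lemma \ref{lemma:fixed} then applies verbatim, and one uses transversality to the codimension-$6$ set $\Lambda$ instead of to $0$.

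You keep the exponential, and your Step 2 then rests on an unverified reworking of the construction behind Lemma \ref{lemma:fixed}. Holomorphic variations cannot literally be ``supported near an arc'', and forcing $\partial_t(H_1,H_2)(p_0)=0$ would need Mergelyan approximation with interpolation. Your ``alternative'' via a real-analytic family in $\theta$ is too vague to assess. As written, Step 2 is a promise rather than a proof, but it also asks for far more than you need. There are two ways to close it.

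\emph{First fix.} Only $\partial_tH_3(p_0)=0$ matters, because the first two components of $\partial_t\delta^{\Dcal}H$ already equal those of $\partial_t\delta_0H$. You get this by replacing $H$ with $H+(0,0,X_3(p_0)-H_3(p_0,t))$. This $t$-dependent translation in $z_3$ preserves $\eta_0$, keeps $H(\cdot,0)=X$, and leaves $\delta_0H$ unchanged. Then $\partial_t\delta^{\Dcal}H(\theta,p_0,q_0)=\mathrm{diag}(1,1,-e^{-X_3(q_0)})\,\partial_t\delta_0H(p_0,q_0)$, which is surjective for every $\theta\in[0,2\pi]$.

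\emph{Second fix, simpler still.} Transversality of your $\Phi$ to $0$ only requires surjectivity of $\partial_t\Phi$ at points of $\Phi^{-1}(0)$. There $e^{\igot\theta}e^{-H_3(p,t)}=e^{-H_3(q,t)}$, so the third row of $\partial_t\Phi$ is $-e^{-H_3(q,t)}$ times the third row of $\partial_t\delta_0H(p,q,t)$. Hence the unmodified sprays of Lemma \ref{lemma:fixed}, composed as in your Step 3, already suffice.

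With either fix your route is complete. It avoids introducing the discrete target $\Lambda$ at the cost of this extra observation, whereas the paper's logarithmic reformulation makes the $\theta$-independence of the derivative automatic.
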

This lemma is a generalization of \cite[Lemma 3.1]{Alarcon2017JGA}, which in turn is a subtle extension of \cite[Lemma 4.4]{AlarconForstnericLopez2017CM}. It allows us to construct embedded Legendrian curves in $\C^3$ which, when composing with the map $\Dcal : \C^3 \to \SL$ given in \eqref{eq:d}, will give us embedded Legendrian curves in $\SL$ that project to embedded Legendrian curves in $\UH \equiv \SL / \U$. 
\begin{remark}\label{rk:periodic}
Note that the map $\delta^{\Dcal}$ defined in \eqref{eq:thetadifleg} is periodic on the $\theta$ variable with period $2 \pi$, hence why we restrict $\delta^{\Dcal} \wt X$ in the statement of Lemma \ref{lemma:theta} to $[0, 2 \pi]  \times \om \times \om$. 
\end{remark}
%
%
\begin{proof}
Let $R$ be an open Riemann surface containing $\om$ as a smoothly bounded compact domain which is a deformation retract of $R$. By \cite[Lemma 4.3]{AlarconForstnericLopez2017CM}, we may assume that $X$ is a holomorphic Legendrian curve on $R$. 
Set $I = [0, 2 \pi]$ and consider the closed discrete subset 
\begin{equation}\label{eq:lamba}
\Lambda = \{ (0, 0, 2 k \pi \igot) \colon k \in \Z \}  \subset \C^3.
\end{equation}
Write $X = (X_1, X_2, X_3)$, let us define the map
\begin{equation}\label{eq:theta}
	\begin{split}
		\delta  X : I \times \overline M \times \overline M & \to \C^3\\
		(\theta, p,q) & \mapsto  ( X_1 (q) -  X_1 (p),  X_2 (q) -  X_2 (p),   X_3 (q)  -  X_3 (p)  + \igot \theta   ) \\
	\end{split}
\end{equation}
and note that
\begin{equation}\label{eq:dD}
	\delta X (\theta, p ,q) \in \Lambda \text{ if and only if } \delta^{\Dcal} X (\theta, p,q) = 0, \quad (\theta, p, q ) \in I \times \om \times \om,
\end{equation}
see \eqref{eq:thetadifleg}. 
Thus Lemma \ref{lemma:theta} will be proved if we are able to find a holomorphic Legendrian curve $\wt  X = (\wt X_1, \wt X_2, \wt X_3) :  \om \to \C^3$ arbitrarily close to $X$ in the $\Cscr^1 (\om)$-norm and such that $\delta \wt X (I \times (\overline M \times \overline M \setminus D_{\overline M})) \cap \Lambda = \varnothing$; here $\delta \wt X$ is defined as in \eqref{eq:theta} but using the component functions of $\wt X$. 

Let us show that such a curve exists.  Since $X$ satisfies the Legendrian condition
\[
d X_3 + X_1 d X_2 = 0 \text{ on } \om,
\] 
the map $(X_1, X_2) : \om \to \C^2$ is an immersion. 
As every immersion is locally injective, there is an open neighborhood $D_{\om} \subset U \subset \om \times \om$ such that
\begin{equation}\label{eq:u}
		\delta X ( I \times ( U \setminus D_{\overline M})) \cap \Lambda = \varnothing,
\end{equation} 
note that $\delta X (\theta, p, q) \in \Lambda$ if its first two coordinates vanish, which does not happen for any $(\theta, p , q) \in I \times (U \setminus D_{\overline M})$ by our choice of $U$; see  \eqref{eq:lamba} and \eqref{eq:theta}.

By Lemma \ref{lemma:fixed} there exists a holomorphic map $H = (H_1, H_2, H_3) :  \overline M \times \C^3 \to \C^3$ such that the following conditions hold for an open neighborhood $\Omega$ of $0$ in $\C^3$:
\begin{enumerate}[label = \rm (\arabic*)]
	\item \label{core} $H(\cdot, 0) = X$.
	\item \label{legendrian} $H (\cdot, t) : \overline M \to \C^3$ is a holomorphic Legendrian curve for all $t \in \Omega$.
	\item \label{thetadom} The map $\delta H :I \times \om \times \om \times \C^3 \to \C^3$ defined by 
	\[
	\begin{split}
			&  \delta H (\theta, p,q, t) = \\
			&( H_1 (q, \zeta) - H_1 (p, t),  \,  H_2 (q, t) -   H_2 (p, t),   \,  H_3 (q, t) -  H_3 (p, t) + \igot \theta   ) 
		\end{split}
	\]
	(cf. \eqref{eq:theta}) is such  that the partial differential
	\begin{equation}\label{eq:thetadom}
	\partial_{t} \big|_{t = 0}  \delta H (\theta, p , q, t) : \C^3 \to \C^3
	\end{equation}
	is surjective for all $(\theta, p, q) \in I \times (\om \times \om \setminus U)$.
\end{enumerate}
Indeed, fix $(p_0, q_0) \in \om \times \om \setminus U$ and apply Lemma \ref{lemma:fixed}. This furnishes a holomorphic spray $H : \om \times \C^3 \to \C^3$ such that \ref{core} and \ref{legendrian} hold, and such that 
\[
\partial_{t} \big|_{t = 0}  \delta_0 H (p_0 , q_0, t) : \C^3 \to \C^3
\]
is surjective, recall \ref{thetadomA}. By definition of $\delta_0 H$ and $\delta H$, 
\[
\partial_{t} \big|_{t= 0} \delta_ 0 H (p_0 ,q_0, t) =  \partial_{t} \big|_{t= 0}  \delta H (\theta, p_0, q_0, t) \text{ for all $\theta \in I$.}
\]
Since the term at the left is surjective, it follows that $H : \om \times \C^3 \to \C^3$ satisfies \eqref{eq:thetadom} for every triple $(\theta, p_0 ,q_0)$ with $\theta \in I$. Being a submersion is an open property, so there is an open neighborhood $(p_0,q_0) \in U_0  \subset \om \times \om$ such that $H : \om \times \C^3 \to \C^3$ satisfies \ref{core}, \ref{legendrian}, and \ref{thetadom} on $I \times U_0$. Since $\om \times \om \setminus U$ is compact, we obtain a holomorphic map that satisfies \ref{core}--\ref{thetadom} by composing finitely many such sprays as explained in \cite[Proof of Theorem 4.1]{AlarconForstnericLopez2016MZ} or \cite[Proof of Theorem 2.4]{AlarconForstneric2014IM}.

Now we can conclude the proof of Lemma \ref{lemma:theta}. By \ref{thetadom} and since  $I \times (\om \times \om \setminus U)$ is compact, there exists an open subset $0 \in \Omega' \subset \Omega \subset \C^N$ such that $	\partial_{t} \big|_{t = 0}  \delta H$ is surjective on  $I \times  (\om \times \om \setminus U) \times \Omega'$. By Abraham's transversality argument using Sard's lemma (see \cite{Abraham1963} or \cite[Theorem 1.4.3]{AlarconForstnericLopez2021Book}) and after possibly shrinking $\Omega'$, we have that for almost every $t \in \Omega'$ the map $\delta H (\cdot, \cdot, \cdot, t)$ is transverse to $\Lambda$ on $I \times (\om \times \om \setminus U)$ and hence, by dimension reasons
\begin{equation}\label{eq:notlambda}
 \delta  H ( I \times (\om \times \om \setminus U), t) \cap \Lambda = \varnothing.
\end{equation}
Choosing $t_0 \in \Omega'$ sufficiently close to zero we get a holomorphic Legendrian curve
\[
\wt X  = (\wt X_1, \wt X_2, \wt X_3)  := H (\cdot, t_0) : \om \to \C^3
\]
which is as close as desired to $X$ on the $\Cscr^1 (\om)$-norm, see \ref{core} and \ref{legendrian}. In particular, we may choose $t_0$ so that \eqref{eq:u} holds for $\wt X$. Indeed, if $(p,q)\in U \setminus D_{\om}$, then $(\wt X_1 (p), \wt X_2(p)) \not = (\wt X_1 (q), \wt X_2 (q))$ provided that $\wt X$ is close enough to $X$. 
Taking also \eqref{eq:notlambda}  into account, it follows that 
\[
\delta \wt X ( I \times (\om \times \om \setminus D_{\om}) ) \cap \Lambda = \varnothing.
\]
In view of \eqref{eq:dD}, $\wt X$ satisfies the conclusion of Lemma \ref{lemma:theta}.
\end{proof}

%
%
\section{Proof of Theorem \ref{th:mtleg}}\label{sec:proofleg}
We prove Theorem \ref{th:mtleg} as a consequence of the following result, which generalizes \cite[Theorem 4.2]{Alarcon2017JGA}; recall Remark \ref{rk:periodic}.
%
%
\begin{theorem}\label{th:mtleggen}
Let $M$ be an open Riemann surface, $K \subset M$ a smoothly bounded Runge compact domain, and $X = (X_1, X_2, X_3): K \to \C^3$ a Legendrian curve of class $\Ascr^1 (K)$. Then $X$ may be approximated in the $\Cscr^1 (K)$-norm by holomorphic Legendrian embeddings $\wt X = (\wt X_1, \wt X_2, \wt X_3): M \to \C^3$ such that
\begin{enumerate}[label = \rm (\roman*)]
\item \label{conc1} The function 
$
\max \{ | \wt X_1 |, - \Re \wt X_3\} : M \to \R_+ =  [0, \infty)
$
is proper; where $\Re$ denotes the real part. 

\item \label{conc2}  $	\delta^{\Dcal} \wt X  :  [0, 2 \pi]  \times M \times  M   \to \C^3$
does not vanish on $[0, 2 \pi] \times (M \times M \setminus D_M)$.
\end{enumerate}
\end{theorem}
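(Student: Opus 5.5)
We will follow the inductive scheme of \cite[Proof of Theorem 4.2]{Alarcon2017JGA}, inserting Lemma \ref{lemma:theta} at each step in place of \cite[Lemma 3.1]{Alarcon2017JGA}. Fix an exhaustion $K = M_0 \Subset M_1 \Subset M_2 \Subset \cdots$ of $M$ by smoothly bounded Runge compact domains with $\bigcup_j M_j = M$, chosen so that each $M_j \setminus \mathring M_{j-1}$ contains at most one topological change (either no change of topology, or a single critical point of a strongly subharmonic Morse exhaustion). We may assume from the start that $X$ is defined on a neighborhood of $K$ and, by Lemma \ref{lemma:theta} applied to $K$, that $\delta^{\Dcal} X$ does not vanish on $[0,2\pi] \times (K \times K \setminus D_K)$ and that $X$ is an embedding. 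We then construct inductively Legendrian curves $X^j : M_j \to \C^3$ of class $\Ascr^1(M_j)$ and numbers $\epsilon_j > 0$ with the following properties. Each $X^j$ is $\epsilon_j$-close to $X^{j-1}$ in $\Cscr^1(M_{j-1})$. Each $X^j$ is an embedding whose map $\delta^{\Dcal} X^j$ has no zeros on $[0,2\pi] \times (M_j \times M_j \setminus D_{M_j})$. Finally, $\max\{|X^j_1|, -\Re X^j_3\} > j$ on $bM_j$ and $> j-1$ on $M_j \setminus \mathring M_{j-1}$.

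The inductive step is the same as in \cite{Alarcon2017JGA}. In the noncritical case we use the Riemann--Hilbert type method for holomorphic Legendrian curves in $\C^3$ from \cite{AlarconForstnericLopez2017CM}, adapted in \cite[Lemma 4.3]{Alarcon2017JGA}, to push the boundary so that $\max\{|X_1|, -\Re X_3\}$ becomes large. In the critical case we attach an arc and use the Mergelyan approximation for Legendrian curves \cite[Lemma 4.3]{AlarconForstnericLopez2017CM}, extending $X^{j-1}$ across the arc so that $\max\{|X_1|,-\Re X_3\} > j-1$ along it. These steps produce a Legendrian curve $Y$ on $M_j$ that is close to $X^{j-1}$ on $M_{j-1}$ and satisfies the properness estimates. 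The estimates are strict inequalities on compact sets, so they survive small $\Cscr^1$ perturbations. We then apply Lemma \ref{lemma:theta} to $Y$ on $\overline M = M_j$ and obtain $X^j$, which keeps all these properties and in addition has $\delta^{\Dcal} X^j \ne 0$ off the diagonal.

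To pass to the limit, choose the $\epsilon_j$ small enough so that $X^j$ converges uniformly on compacts to a holomorphic Legendrian curve $\wt X : M \to \C^3$ that is $\Cscr^1$-close to $X$ on $K$; the Legendrian condition is closed under uniform convergence. The properness in \ref{conc1} follows from the estimates on $bM_j$ and on $M_j\setminus\mathring M_{j-1}$, as in \cite{Alarcon2017JGA}. The main obstacle is \ref{conc2}: the nonvanishing of $\delta^{\Dcal}$ is an open condition only on compact sets that stay away from the diagonal. We handle this in the standard way. The set $[0,2\pi]\times (M_{j-1}\times M_{j-1}\setminus D)$ is not compact, so we use the equivalent condition through $\delta X$ and $\Lambda$ from \eqref{eq:dD}.

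Near the diagonal, for $(p,q)$ close together, nonvanishing follows from the immersion property of $(X_1, X_2)$. That property is stable under $\Cscr^1$-small perturbations, because injectivity of $(X_1,X_2)$ on a neighborhood $U$ of the diagonal is controlled by a lower bound on the differential of $(X_1,X_2)$. Away from $U$, the function $|\delta^{\Dcal} X^{j-1}|$ has a positive minimum on the compact set $[0,2\pi]\times(M_{j-1}\times M_{j-1}\setminus U)$. We therefore choose $\epsilon_j$ so small that every Legendrian curve $\epsilon_j$-close in $\Cscr^1(M_{j-1})$ to $X^{j-1}$ inherits both properties on $M_{j-1}$; this is a quantitative version of the closing paragraph of the proof of Lemma \ref{lemma:theta}. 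We also require $\sum_{k>j}\epsilon_k<\epsilon_j$, so these properties pass to the limit $\wt X$ on every $M_{j-1}$. Since every pair $(p,q)$ lies in some $M_j \times M_j$, this proves \ref{conc2}. Injectivity of $\wt X$ follows from \ref{conc2} with $\theta = 0$, and together with the immersion property this makes $\wt X$ an embedding.
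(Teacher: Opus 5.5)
Your proposal is correct and shares the paper's architecture: an inductive use of the properness lemma \cite[Lemma 4.3]{Alarcon2017JGA} along an exhaustion, a perturbation by Lemma \ref{lemma:theta} at each stage, and then a limit. The difference is in how \ref{conc2} survives the limit.

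The paper fixes in advance decreasing neighborhoods $U_n$ of the diagonal such that the compacts $M_n\times M_n\setminus U_n$ exhaust $M\times M\setminus D_M$. It then only asks that $\delta^{\Dcal}$ stay nonzero on $[0,2\pi]\times(M_n\times M_n\setminus U_n)$ for maps $\Cscr^0$-close to $X^n$, which follows from a positive minimum on a compact set. Every off-diagonal pair eventually lies in one of these compacts, so nothing near the diagonal has to be controlled at the limit stage.

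You instead show that the whole off-diagonal condition on $M_{j-1}$ is stable under $\Cscr^1$-small perturbations. You treat a neighborhood of the diagonal via quantitative local injectivity of the immersion $(X^{j-1}_1,X^{j-1}_2)$, and its compact complement via a positive minimum. This costs a bit more work. In exchange it gives nonvanishing on all of $M_{j-1}\times M_{j-1}\setminus D_{M_{j-1}}$ at once, and shows directly that the limit is an immersion, a point the paper leaves implicit.

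A few small points to tighten:
\begin{itemize}
\item The near-diagonal step rests on an estimate $|f(q)-f(p)|\ge c\operatorname{dist}(p,q)$ for the fixed immersion $f=(X_1^{j-1},X_2^{j-1})$. You combine this with $\Cscr^1$-closeness and integration along short paths inside $M_{j-1}$. A lower bound on the differential of the perturbed map alone would not give injectivity on a fixed neighborhood.
\item Since $\sum_{k>j}\epsilon_k<\epsilon_j$ only gives $\|\wt X-X^{j-1}\|_{1,M_{j-1}}<2\epsilon_j$, you should choose $\epsilon_j$ so that $2\epsilon_j$-close curves inherit the properties.
\item Your induction needs $\max\{|X_1|,-\Re X_3\}>0$ on $bK$ to start. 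The paper arranges this in \eqref{eq:max} by making the components nonconstant and slightly enlarging $K$.
\item Calling $\wt X$ an embedding also uses that $\wt X$ is proper, which follows from \ref{conc1}.
\end{itemize}
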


Property \ref{conc1} is a technical condition that guarantees properness of the map $\wt F = \Dcal \circ \wt X : M \to \SL$. 
It already appears in \cite{Alarcon2017JGA} and we just apply the properness lemma given in this reference to guarantee that it holds now. 
The novelty in Theorem \ref{th:mtleggen} is property \ref{conc2}, which is obtained from a recursive application of Lemma \ref{lemma:theta}. 
%
%
\begin{proof}[Proof of Theorem \ref{th:mtleg} assuming Theorem \ref{th:mtleggen}]
As before, set $I =  [0, 2 \pi]$.
Let $M$ be an open Riemann surface. Theorem \ref{th:mtleggen} furnishes a holomorphic Legendrian curve $X = (X_1, X_2, X_3)  : M \to \C^3$ such that 
\begin{enumerate}[label = \rm (\roman*)]
	\item \label{conc1app}$\max \{ | X_1|, - \Re  X_3\} : M \to \R_+$
	is proper and
		
	\item \label{conc2app}$\delta^{\Dcal}X :  I \times M \times  M  \to \C^3$ does not vanish on $I \times (M \times M \setminus D_M)$.
\end{enumerate}
Consider the holomorphic Legendrian curve 
\[
	F := 	\Dcal \circ X =\left(\begin{matrix}
			e^{-X_3} & X_1 e^{X_3}\\
			X_2 e^{-X_3} &  (1 + X_1 X_2) e^{X_3}
		\end{matrix}\right) : M \to \SL,
\]
recall \eqref{eq:d}.
By the same argument as in \cite[Proof of Theorem 1.1]{Alarcon2017JGA}, this map is proper in $\SL$ by \ref{conc1app}. 
Moreover,  $\Delta F : I \times M \times M \to \C^4$ does not vanish on $I \times (M \times M \setminus D_M)$ by \ref{conc2app}. Indeed, suppose that there exists $(\theta, p,q) \in I \times (M \times M \setminus D_M)$ such that 
\[
\Delta F (\theta, p, q) = F(q) - F(p) A_{\theta} = 0,
\]
recall the notation in \eqref{eq:u1}. This is equivalent to 
\[
X_1 (q) = X_1 (p), \quad X_2 (q)= X_2 (p), \quad e^{-X_3 (q)} =  e^{\igot \theta} e^{- X_3 (p)},
\]
which is impossible in view of \ref{conc2app} and the definition of $\delta^{\Dcal}$, see \eqref{eq:thetadifleg}.
\end{proof}

%
%
\begin{proof}[Proof of Theorem \ref{th:mtleggen}]
Again set $I = [0, 2 \pi]$.
We may assume that $X$ extends to a holomorphic Legendrian curve on an open neighborhood of $K$ and has no constant component function by \cite[Theorem 5.1]{AlarconForstnericLopez2017CM} (see also \cite[Theorem 2.3]{Alarcon2017JGA}). Thus we may assume, up to slightly enlarging $K$, that there is a number $\rho_0 >0$ such that
\begin{equation}\label{eq:max}
\max \{  |X_1|, - \Re X_3\} > \rho_0 \text{ on } b K.
\end{equation}
Let 
\begin{equation}\label{eq:mn}
	K \Subset M_1 \Subset M_2 \Subset \cdots \subset \bigcup_{n\in\n} M_n = M
\end{equation}
be an exhaustion of $M$ by connected smoothly bounded Runge compact domains. Consider also 
\[
  U_1 \Supset U_2 \Supset \cdots \supset D_M
\]
a decreasing family of smoothly bounded domains in $M \times M$ such that $M_n \times M_n \setminus U_n$ is a smoothly bounded compact domain for every $n \in \N$ and
\begin{equation}\label{eq:mmun}
M_1 \times M_1 \setminus U_1 \Subset M_2 \times M_2 \setminus U_2 \Subset \cdots \subset M \times M \setminus D_M
\end{equation}
is an exhaustion of $M \times M \setminus D_M$. 

Fix $\varepsilon_0 >0$. In view of \eqref{eq:max} and setting $K = M_0$, $X = X^0|_{M_0} : M_0 \to \C^3$, we may apply \cite[Lemma 4.3]{Alarcon2017JGA} in a standard inductive way and obtain sequences of real numbers $\varepsilon_n >0$ and Legendrian curves $X^n =(X_1^n, X_2^n, X_3^n) : M_n \to \C^3$ of class $\Ascr^1 (M_n)$ such that the following conditions hold for every $n \in \N$. 
	\begin{enumerate}[label = \rm (\alph*$_n$)]
	 \item \label{app} $\| X^n - X^{n-1} \|_{1, M_{n-1}} < \varepsilon_{n-1}$.
	 \item \label{prop} $\max \{ |  X^n_1, - \Re  X^n_3 \} > n - 1$ on $M_n \setminus \mathring M_{n-1}$.
	 \item \label{propb} $\max \{ |  X_1^n|, - \Re X_3^n \} > n$ on $b M_n$. 
	 \newcounter{an}\setcounter{an}{\value{enumi}}
	\end{enumerate}
Moreover, by Lemma \ref{lemma:theta}, we may choose the aforementioned sequences such that
\begin{enumerate}[label = \rm (\alph*$_n$)]
	\setcounter{enumi}{\value{an}}
	\item \label{deltat} $\delta^{\Dcal} X^n: I \times M_n \times M_n \to \C^3$ does not vanish on $I \times( M_n \times M_n \setminus D_{M_n})$.
	 \item \label{eps}$0 < \varepsilon_n < \varepsilon_{n-1} /2$ and if $Y : M_n \to \C$ is a continuous function with $\| X^n - Y\|_{0, M_n} < 2 \varepsilon_n$, then $\delta^{\Dcal} Y$ does not vanish on $I \times (M_n \times M_n \setminus U_n)$.
\end{enumerate}
Indeed, suppose that we have $X^n$ satisfying \ref{app}--\ref{propb}. By a small deformation provided by Lemma \ref{lemma:theta}, we may assume that $X^n$ also satisfies \ref{deltat}. 
By compactness there exists $a_n\in \R_+$ such that
\[
	| \delta^{\Dcal} X^n| > a_n \text{ on }   I \times (M_n \times M_n \setminus U_n),
\] 
Let $Y =(Y_1, Y_2, Y_3) : M_n \to \C$ be a continuous function with $\| X^n - Y\|_{0, M_n} < 2 \varepsilon_n$. There is another constant $b_n>0$ that only depends on $X^n : M_n \to \C^3$ such that 
\[
|\delta^{\Dcal} Y  - \delta^{\Dcal}X^n  | < b_n \varepsilon_n \text{ on }   I \times M_n \times M_n,
\]
see \eqref{eq:thetadifleg} and note that $|e^{X_3^n} - e^{Y_3}| \le c_n |X_3^n - Y_3|$ on $M_n$ for certain constant $c_n >0$.
Consequently,
\[
|\delta^{\Dcal} Y | > | \delta^{\Dcal}X^n | - b_n \varepsilon_n > a_n - b_n \varepsilon_n \text{ on } I \times (M_n \times M_n \setminus U_n),
\]
so it suffices to choose 
\[
\varepsilon_n < \min  \{ \varepsilon_{n-1} /2, \frac{a_n}{b_n}   \}
\]
to guarantee condition \ref{eps}.

By \ref{app}, \ref{eps}, and \eqref{eq:mn}, there exists a limit holomorphic Legendrian curve
\[
\wt X = (\wt X_1, \wt X_2, \wt X_3) : M= \bigcup_{n\in\n} M_n \to \C^3
\]
such that 
\[
	\| \wt X - X^n \|_{1, M_n} < 2 \varepsilon_n \text{ for every } n \in \{0\} \cup \N.
\]
In particular, $\wt X$ may be chosen as close as desired to $X^0 = X$ on the $\Cscr^1 (K)$-norm, recall that $K=M_0$ and that $\varepsilon_0$ was arbitrarily chosen.
By the previous equation and \ref{eps} we obtain that $\delta^{\Dcal}\wt X: I \times M \times M \to \C^3$ does not vanish anywhere on $I \times (M_n \times M_n \setminus U_n)$, for every $n\in \N$.
Taking \eqref{eq:mmun} into account, \ref{conc2} holds.
Moreover, choosing each $\varepsilon_n$ sufficiently small, property \ref{prop} ensures that  $\max \{ | \wt X_1 |, - \Re  \wt X_3 \} : M  \to \R_+$ is a proper map (see \cite[Proof of Theorem 4.2]{Alarcon2017JGA} for the precise argument), so \ref{conc1} is satisfied. 
This concludes the proof.
\end{proof}

%
%
\subsection*{Acknowledgements}
Research supported by the State Research Agency (AEI) via the grant no. 
 PID2023-150727NB-I00 funded by MICIU/AEI/10.13039/501100011033, Spain.


	
	\noindent Jorge Hidalgo
	
	\noindent Departamento de Geometr\'{\i}a y Topolog\'{\i}a,  Universidad de Granada, Campus de Fuentenueva s/n, E--18071 Granada, Spain.
	
	\noindent  e-mail: {\tt jorgehcal@ugr.es}

\end{document}